\numberwithin{equation}{section}
\numberwithin{equation}{subsection}
\newtheorem{thm}{Theorem}[section]
\newtheorem{lemma}[thm]{Lemma}
\newtheorem{definition}[thm]{Definition}
\newtheorem*{remark*}{Remark}
\newtheorem{example}[thm]{Example}
\newcommand{\ind}{{\text{Ind}}}
\newcommand{\Hom}{{\mathrm{Hom}}}
\newcommand{\tra}{{\mathrm{tra}}}
\newcommand{\res}{{\mathrm{res}}}
\newcommand{\Ho}{{\mathrm{H}}}
\newcommand{\Ind}{\mathrm{Ind}}
\newcommand{\bigzero}{\mbox{\normalfont\Large\bfseries 0}}
\newcommand{\rvline}{\hspace*{-\arraycolsep}\vline\hspace*{-\arraycolsep}}
\newcommand{\bZ}{{\mathbb Z}}
\newcommand{\irr}{\mathrm{Irr}}
\title[On Projective representations of direct product of  groups]{On Projective representations of direct product of  groups}
\author{Sumana Hatui}
\address{School of Mathematical Sciences,
	National Institute of Science Education and Research, An OCC of Homi Bhabha National Institute, Bhubaneswar 752050, Odisha, India}
\email{sumanahatui@niser.ac.in, sumana.iitg@gmail.com}
\begin{document}

\subjclass[2020]{20C25, 20D15, 20F18, 20K25}
\keywords{Schur multiplier, Projective representations, Representation group, $p$-groups, Direct product of groups}
\begin{abstract}
		Let $G=G_1 \times  G_2$ be a finite group.   
	By  \cite[Theorem 2.3, p. 20]{GK2},  the second cohomology group
	$\Ho^2(G,\mathbb C^\times)$ is isomorphic to $\Ho^2(G_1,\mathbb C^\times) \times \Ho^2(G_2,\mathbb C^\times) \times \Hom(G_1/G_1' \otimes_\mathbb Z G_2/G_2', \mathbb C^\times ).$
A $2$-cocycle $\alpha$ of $G$ is called a bilinear cocycle  if  the corresponding cohomology class $[\alpha]$ of $ \Ho^2(G,\mathbb C^\times)$
lies in  $\Hom(G_1/G_1' \otimes_\mathbb Z G_2/G_2', \mathbb C^\times)$. 
In this article, our aim is to construct an irreducible complex  projective representation $\rho$ of $G$ for bilinear cocycles $\alpha$.

If $G_1$ is any abelian $p$-group  and $G_2$ is an elementary abelian $p$-group, then we  give a construction of  $\rho$  for bilinear cocycles $\alpha$ of $G$. For a subgroup $H$ of $G$ of index $\leq p^2$, we  also count the number of cohomology classes $[\alpha]$  for which the irreducible projective representations behave the same  while restricting on $H$. 
Finally,  we consider any $p$-group $G=G_1\times G_2$,
	 and we discuss how the above construction helps us to describe an irreducible $\alpha$-representation of $G$ when $[\alpha]$ is of order $p$ or $G_2/G_2'$ is elementary abelian.
	 We also discuss several examples as an application of the above results.
\end{abstract}

\maketitle 

\section{Introduction}
For a finite group $G$, the second cohomology group $\Ho^2(G,\mathbb C^\times)$ is called the Schur multiplier of $G$.
For a $2$-cocycle  $\alpha$  of $G$, $[\alpha]\in \Ho^2(G,\mathbb C^\times)$ denotes the cohomology class of $\alpha$ contains all  the $2$-cocycles cohomologous  to $\alpha$.  
A complex projective representation of $G$ corresponding to the $2$-cocycle $\alpha$  is a map $\rho: G \to GL(V)$ satisfying
\[
\rho(1)=id_V, ~\rho(gh)=\alpha(g,h)\rho(g)\rho(h), g,h \in G,
\]
where $V$ is a complex vector space. Then we say $\rho$ is an $\alpha$-representation of $G$; see \cite[Chapter 3]{GK} for the details.
 $\irr^\alpha(G)$ denotes the set of all  irreducible $\alpha$-representations of $G$ up to linear equivalence.

If $G=G_1\times G_2$, then 
by  \cite[Theorem 2.3, p. 20]{GK2}, we have
\begin{eqnarray}\label{Schur}
\Ho^2(G,\mathbb C^\times)\cong \Ho^2(G_1,\mathbb C^\times) \times \Ho^2(G_2,\mathbb C^\times) \times \Hom(G_1/G_1' \otimes_\mathbb Z G_2/G_2', \mathbb C^\times ).
\end{eqnarray}
A $2$-cocycle $\alpha$ of $G$ is called bilinear cocycle if 
$[\alpha]$ of  $\Ho^2(G,\mathbb C^\times)$ belongs to $  \Hom(G_1/G_1' \otimes_\mathbb Z G_2/G_2', \mathbb C^\times )$ via this isomorphism.
For convenience,  we use the notation  $ G_1\otimes G_2,$ instead of the abelian tensor product $G_1/G_1' \otimes_\mathbb Z G_2/G_2'$ without further reference.
If the cohomology class $[\alpha] \in  \Ho^2(G,\mathbb C^\times)$ belongs to $ \Ho^2(G_1,\mathbb C^\times) \times \Ho^2(G_2,\mathbb C^\times)$ via the above isomorphism, then the elements of $\irr^\alpha(G)$ are linearly equivalent to the elements of the form $(\rho_1\otimes \rho_2)$ for $\rho_i\in \irr^{\alpha_i}(G_i)$, for $[\alpha_i]=\res^G_{G_i}([\alpha])$, $i=1,2$; see \cite[Proposition 7.1,  p. 124]{GK2}. 
But this fact is not known if $[\alpha]$ belongs to $\Hom(G_1/G_1' \otimes G_2/G_2', \mathbb C^\times)$.  Hence, in this article, we give a construction of	the elements of $\irr^\alpha(G)$ for the $p$-groups $G=G_1 \times G_2$ for bilinear cocycles $\alpha$ of $G$. 
If $G$ is a nilpotent group, say,  $G=S_1 \times S_2\times \cdots S_k$, for Sylow subgroups $S_i$ of $G$, then
any projective representation of $G$ is projectively equivalent to  $(\rho_1\otimes \rho_2\otimes \cdots \otimes\rho_k)$ for $\rho_i$ an irreducible projective representation  of $S_i$; see \cite[Corollary 2.2.11]{karpilovsky} and \cite[Proposition 7.4, p. 125]{GK2}. Hence it is enough to study the projective representations of $p$-groups.

The theory of projective representations  was first started by pioneering work of Schur in \cite{IS4, IS7, IS11}. Later several authors have studied this problem for different groups in \cite{Moa1, Moa, Moa2, Na1,Na2, WB, PS,  hatui2023projective, hatui2023projective1}.
Let us recall the history of projective representations of finite abelian groups.
We know that the irreducible complex ordinary representations of finite abelian groups are one dimensional, whereas this fact is not true for their projective representations. 
This problem has been studied by several authors, most notably by Morris, Saeed-ul-Islam, and Thomas in \cite{Moa, Moa2}. 
 For some special class of cocycles $\alpha$, all irreducible $\alpha$-representations of $(\mathbb Z/n\mathbb Z)^k$  and of  finite abelian groups had been studied  in \cite{Moa} and \cite{Moa2}  respectively.
 Later, Higgs \cite{RH} gives a method of construction of the elements of $\irr^\alpha(G)$ for the elementary abelian $p$-groups $G$, for cocycle $\alpha$ of $G$.
 
Continuing this line of investigation, for the bilinear cocycles $\alpha$, 
 we give an explicit construction of an irreducible $\alpha$-representation of abelian $p$-groups $G=G_1\times G_2$ when $G_2$ is elementary abelian; see section \ref{abelian}.   We also describe  alternate methods in some special cases. 
It follows  by \cite[Theorem  2.21, p. 380]{GK3} that it is enough to construct just one element of $\irr^\alpha(G)$.
  
 Let $H$ be a subgroup of $G$ of index $p$. We know by Clifford's theory that for $\rho \in \irr^\alpha(G)$,
 either $\rho |_H$ is irreducible or it is the direct sum of $p$ many distinct elements of $\mathrm{Irr}^{\alpha |_{H\times H}} (H)$.
In section \ref{restriction}, we count the number of cohomology classes $[\alpha]$ for bilinear cocycles $\alpha$, for which the irreducible $\alpha$-representations  behave in the same way while restricting on $H$. We also extend these results when $H$ is a subgroup of $G$ of index $p^2$.

In section \ref{directproduct}, we consider  any $p$-group $G=G_1\times G_2$. We discuss how the construction  given in section \ref{abelian} for the abelian groups helps us to  describe an irreducible $\alpha$-representation of $G$ for bilinear cocycles $\alpha$  when $[\alpha]$ is of order $p$ or $G_2/G_2'$ is elementary abelian.

We also discuss some examples as an application of these constructions.
%
%
%
%

\section{Preliminaries}
In this section, we recall the results which will be used in the upcoming sections.
Let 
\[
1\to N \to G  \xrightarrow{\pi} G/N \to 1 
\]
be a central extension such that $N \subseteq G'$. 
Then,
by Hochschild Serre spectral sequence \cite[Theorem 2, p. 129]{HS}, we have the following exact sequence
\begin{eqnarray}\label{Hochschild}
1\to \Hom(N, \mathbb C^\times)  \xrightarrow{\tra} \Ho^2(G/N, \mathbb C^\times) \xrightarrow{\inf} \Ho^2(G, \mathbb C^\times),
\end{eqnarray}
where $\tra: \Hom(N, \mathbb C^\times)  \xrightarrow{\tra} \Ho^2(G/N, \mathbb C^\times)$  is called the transgression homomorphism, which is defined by $\tra(\chi)=[\alpha]$ for $\chi \in \Hom(N, \mathbb C^\times)$ such that
\[
\alpha(xN,yN)=\chi\big(\mu(xN)\mu(yN)\mu(xyN)^{-1}\big), x,y \in G,
\]
$\text{for a section } \mu \text{ of } \pi$. The map 
$ \inf: \Ho^2(G/N, \mathbb C^\times) \to \Ho^2(G, \mathbb C^\times)$  is called the inflation homomorphism, which is defined as follows:
$
\inf([\beta])=[\alpha]
$ for  $ [\beta] \in \Ho^2(G/N, \mathbb C^\times)$ such that $\alpha(x,y)=\beta(xN,yN), x,y \in G$.

\bigskip

Let $G=G_1\times G_2$.
By  \cite[Theorem 2.3, p. 20]{GK2}, there is an isomorphism 
$$\psi:  \Ho^2(G_1,\mathbb C^\times) \times \Ho^2(G_2,\mathbb C^\times) \times \Hom(G_1 \otimes G_2, \mathbb C^\times ) \to \Ho^2(G,\mathbb C^\times),$$ where $\psi$ is defined as follows: for $[\alpha_i]\in \Ho^2(G_i,\mathbb C^\times), i=1,2$ and $f\in \Hom(G_1\otimes G_2,\mathbb C^\times)$,
$\psi([\alpha_1], [\alpha_2], f)=[\alpha]$ such that 
\[
\alpha(g_1g_2, g_{11}g_{22})=\alpha_1(g_1, g_{11})\alpha_2(g_2, g_{22})f(g_{11} G_1' \otimes g_2G_2'), ~g_1,g_{11} \in G_1, g_2,g_{22} \in G_2.
\]
Hence, a bilinear $2$-cocycle $\alpha$ of $G$, we mean $[\alpha] \in \Ho^2(G,\mathbb C^\times)$ such that  $[\alpha]=\psi(1,1,f) $ for some $f\in  \Hom(G_1\otimes G_2, \mathbb C^\times )$, i.e., $\alpha$ is of the form 
\begin{eqnarray}\label{bilinearcocycle}
\alpha(g_1g_2, g_{11}g_{22})=f(g_{11} G_1' \otimes g_2G_2'), ~g_1,g_{11} \in G_1, g_2,g_{22} \in G_2.
\end{eqnarray}

\begin{definition}[$\alpha$-regular element of $G$]
For a $2$-cocycle $\alpha$ of $G$, an element $g \in G$ is called $\alpha$-regular element of $G$ if $\alpha(g,x)=\alpha(x,g)$ for all such $x \in G$ satisfying $xg=gx$.
\end{definition}
\begin{definition}[$\alpha$-central type]
For a $2$-cocycle $\alpha$ of $G$, $G$ is called $\alpha$-central type if $G$ has a unique irreducible $\alpha$-representation upto linear equivalence.\\
\end{definition}

\subsection{Mackey's Irreducibility criterion}
Let  $\alpha$ be a $2$-cocycle of  $G$ and $H$ be a subgroup of $G$. Suppose $\rho: H \to GL(V)$ is an $\alpha|_{H \times H}$-representation of $H$, where $\alpha|_{H\times H}$ denotes the restriction of  $\alpha$ on  $H$.
  For $g \in G$,  the map $\rho^{(g)}: H \to GL(V)$ for $g \in G$ defined by 
\[
\rho^{(g)}(x)=\alpha^{-1}(xg, g^{-1})\alpha^{-1}(g, g^{-1}xg)\alpha(g,g^{-1})\rho(g^{-1}xg),
\]
is an $\alpha$-representation of $G$,  follows from \cite[Lemma 7.1, p. 226]{GK}. 
Let $\{g_1,g_2, \ldots, g_n\}$ be a transversal of $H$ in $G$. Then the induced representation 
$\Gamma=\Ind_H^G(\rho)$ is defined as a  matrix representation  $\Gamma(g)= [\Gamma_{ij}(g)], ~ 1\leq i, j \leq n$, where
$$
\Gamma_{ij}(g)=\alpha(g,g_j)\alpha^{-1}(g_i,g_i^{-1}gg_j)\rho(g_i^{-1}gg_j) \text{ if } g_i^{-1}gg_j\in H, 
$$
and 
$ \Gamma_{ij}(g)=0$ if $g_i^{-1}gg_j\notin H$; see  \cite[Theorem 2.2, p. 201]{GK} for the details.\\

Now we recall Mackey's criterion \cite[Corollary 7.12, p. 239]{GK} for the irreducibility of the induced representation.

\begin{thm}\label{Mackey}
Let  $\alpha$ be a $2$-cocycle of $G$, $H$ be a normal subgroup of $G$ and $T$ be a transversal of $H$ in $G$. Let  $\rho: H \to GL(V)$ be an irreducible $\alpha|_{H \times H}$-representation of $H$. Then $\ind^G_H(\rho)$ is irreducible if and only if $\rho$ and $\rho^{(t)}$ are disjoint for all $t \in T-\{1\}$.
\end{thm}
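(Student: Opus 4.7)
The plan is to reduce the irreducibility question to a computation of the intertwining algebra $\Hom_G(\Ind_H^G \rho, \Ind_H^G \rho)$ via the projective analogues of Frobenius reciprocity and Mackey's decomposition, and then apply Schur's lemma for projective representations (which says that a projective representation $\sigma$ of a finite group is irreducible if and only if $\Hom_G(\sigma,\sigma) = \mathbb{C}\cdot \mathrm{id}$, and that the $\Hom$-space between two inequivalent irreducibles vanishes).

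First I would establish projective Frobenius reciprocity: for any $\alpha$-representation $\sigma$ of $G$,
\[
\Hom_G(\Ind_H^G \rho, \sigma) \;\cong\; \Hom_H(\rho, \res_H^G \sigma).
\]
Both sides consist of ordinary $\mathbb{C}$-linear intertwiners (the cocycle $\alpha$ appears on both sides of each intertwining relation, so no twisting remains), and the standard bijection ``restrict to the $H$-component / extend by the matrix formula for induction'' goes through using the matrix description of $\Ind_H^G \rho$ recalled in the excerpt. Next, I would prove Mackey's decomposition in the normal case,
\[
\res_H^G \Ind_H^G \rho \;\cong\; \bigoplus_{t \in T} \rho^{(t)}.
\]
Since $H \trianglelefteq G$, the matrix entry $\Gamma_{ij}(h) = \alpha(h,g_j)\alpha^{-1}(g_i, g_i^{-1}hg_j)\rho(g_i^{-1}hg_j)$ for $h \in H$ is nonzero only when $i = j$, so the restriction is block-diagonal; the $i$-th block is $h \mapsto \alpha(h,g_i)\alpha^{-1}(g_i,g_i^{-1}hg_i)\rho(g_i^{-1}hg_i)$. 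A direct manipulation with the $2$-cocycle identity on the triples $(h,g_i,g_i^{-1})$, $(g_i,g_i^{-1},hg_i)$, and $(g_i,g_i^{-1},g_i)$ shows that this coincides with $\rho^{(g_i)}(h)$ as defined in the excerpt.

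Combining these, we obtain
\[
\Hom_G(\Ind_H^G \rho, \Ind_H^G \rho) \;\cong\; \bigoplus_{t \in T} \Hom_H(\rho, \rho^{(t)}).
\]
Each $\rho^{(t)}$ is itself an irreducible $\alpha|_{H\times H}$-representation of $H$, so by Schur each summand is either $0$ (when $\rho$ and $\rho^{(t)}$ are disjoint) or one-dimensional (when they are equivalent). The summand for $t = 1$ always contributes $1$, and $\Ind_H^G \rho$ is irreducible if and only if the total dimension is $1$; this is exactly the condition that $\rho \not\sim \rho^{(t)}$ for every $t \in T \setminus \{1\}$.

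The main obstacle is the cocycle bookkeeping in the Mackey decomposition step: one must verify that the exact combination of $\alpha$-factors appearing in the definition of $\rho^{(g)}$ in the excerpt is precisely the one produced by restricting the matrix-form of $\Ind_H^G \rho$ to $H$. This is not conceptually hard but requires several careful applications of the $2$-cocycle identity on the right triples; the rest of the argument is essentially formal, parallel to the classical ordinary-representation proof.
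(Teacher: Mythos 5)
Your argument is correct, and it is essentially the standard proof of this criterion; note that the paper itself does not prove Theorem~\ref{Mackey} but simply quotes it from Karpilovsky \cite[Corollary 7.12, p.~239]{GK}, so there is no in-paper proof to diverge from. All three ingredients you use are sound in the twisted setting: $\alpha$-representations of $G$ are modules over the twisted group algebra $\mathbb{C}^{\alpha}G$, which is semisimple over $\mathbb{C}$, so Schur's lemma applies; induction is $\mathbb{C}^{\alpha}G\otimes_{\mathbb{C}^{\alpha|_{H\times H}}H}(-)$, so Frobenius reciprocity is tensor--hom adjunction; and normality of $H$ makes the double cosets single cosets, giving the block-diagonal restriction. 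The ``cocycle bookkeeping'' you flag as the main obstacle resolves in one step: since $\rho(1)=id_V$ forces $\alpha$ to be normalized ($\alpha(g,1)=\alpha(1,g)=1$), the cocycle identity on the triple $(h,g_i,g_i^{-1})$ gives $\alpha(h,g_i)\,\alpha(hg_i,g_i^{-1})=\alpha(g_i,g_i^{-1})$, i.e.\ $\alpha(h,g_i)=\alpha^{-1}(hg_i,g_i^{-1})\alpha(g_i,g_i^{-1})$, which is exactly the discrepancy between the $i$-th diagonal block of $\res_H\Ind_H^G\rho$ and the definition of $\rho^{(g_i)}$ recalled in the paper. One should also record, as you implicitly use, that each $\rho^{(t)}$ is again an irreducible $\alpha|_{H\times H}$-representation of $H$ (conjugation by $t$ is an automorphism of $H$ and the scalar factors do not change invariant subspaces), so that ``disjoint'' for these irreducibles is equivalent to $\Hom_H(\rho,\rho^{(t)})=0$; with that, the dimension count $\dim\Hom_G(\Ind\rho,\Ind\rho)=\sum_{t\in T}\dim\Hom_H(\rho,\rho^{(t)})$ finishes the proof exactly as you say.
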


%
%
%
%
%
%

\section{Construction of $\alpha$-representations for abelian $p$-groups} \label{abelian}
Let  \begin{eqnarray}\label{elementaryabelian}
	G=G_1 \times  G_2
	\end{eqnarray}
such that 
\[
G_1=\langle x_1 \rangle \times \langle x_2 \rangle \times \ldots  \times \langle x_t \rangle \cong \mathbb Z/p^{r_1}\mathbb Z \times \mathbb Z/p^{r_2}\mathbb Z \times \cdots \times  \mathbb Z/p^{r_t}\mathbb Z, t \geq 1, r_1 \geq r_2 \ldots \geq r_t \geq 1\]
and
\[
G_2=\langle y_1 \rangle \times \langle y_2 \rangle \times \ldots  \times \langle y_s \rangle \cong (\bZ/p\bZ)^{s}, s \geq 1.
\]
Suppose  $\alpha$ is a bilinear cocycle of $G$. Our aim is to construct an irreducible $\alpha$-representation of $G$. It follows  by \cite[Theorem  2.21, p. 380]{GK3} that it is enough to construct  one irreducible $\alpha$-representation of $G$.
In this section, $G$ is always of the above form, and $\alpha$ is a bilinear cocycle of $G$. We use these notations without further reference.

By \eqref{bilinearcocycle}, it follows that every bilinear cocycle $\alpha$ of $G$ is cohomologous to a cocycle of the following form.
For $0 \leq m_i, m_i' <  p^{r_i}$ and $0 \leq k_i, k_i'< p$, 
\begin{eqnarray*}
 && \alpha\big(x_1^{m_1}x_2^{m_2}\ldots x_t^{m_t}y_1^{k_1}y_2^{k_2}\ldots y_s^{k_s},
x_1^{m'_1}x_2^{m'_2}\ldots x_t^{m'_t}y_1^{k'_1}y_2^{k'_2}\ldots y_s^{k'_s}\big)\\
=&& \prod_{1 \leq i \leq t, 1\leq   j \leq s}\mu_{ij}^{m_i'k_j} \text{ such that  } \mu_{ij} \in \mathbb C^\times, \mu_{ij}^p=1.
\end{eqnarray*}
Fix $\xi=e^{\frac{2\pi i}{p}}$.  
Then by the above  description we have
\begin{eqnarray}\label{bilinea}
	&& \alpha\big(x_1^{m_1}x_2^{m_2}\ldots x_t^{m_t}y_1^{k_1}y_2^{k_2}\ldots y_s^{k_s},
	x_1^{m'_1}x_2^{m'_2}\ldots x_t^{m'_t}y_1^{k'_1}y_2^{k'_2}\ldots y_s^{k'_s}\big)\nonumber
	\\ 
	=&& \prod_{1 \leq i \leq t, 1\leq   j \leq s} \xi^{c(i,j)m_i'k_j} \text{ for  } 0 \leq c(i,j) \leq p-1.
\end{eqnarray}
The bilinear cocycles $\alpha$ determine $p^{st}$ many cohomology classes $[\alpha] \in \Ho^2(G, \mathbb C^\times)$ and those are explicitly given by \eqref{bilinea} for $p^{st}$ many choices of $c(i,j)$.

\bigskip

Let $M_{m\times n}(\mathbb Z/p\mathbb Z)$ denotes  $m\times n$ matrices with entries in $\mathbb Z/p\mathbb Z$.
Now the following two facts follow from the discussion given  in \cite[Lemma 2.23, p. 382]{GK3} and \cite[p. 772]{RH}.\\

\textbf{Fact 1}: Every  $\alpha$ of the  form \eqref{bilinea} uniquely determines  and determined by a matrix $A_\alpha \in  M_{(s+t) \times (s+t)}(\mathbb Z/p\bZ)$ of the form
%
%
\[
A_\alpha
=
\begin{pmatrix}
 \bigzero 
 & \rvline & 
\begin{matrix}
c(1,1) & c(1,2) & \cdots  & c(1,s)\\
c(2,1) & c(2,2) & \cdots  & c(2,s)\\
\vdots & \vdots & \vdots & \vdots \\
c(t,1) & c(t,2) & \cdots  & c(t,s)
\end{matrix}\\
\hline
\begin{matrix}
-c(1,1) & -c(2,1) & \cdots  & -c(t,1)\\
-c(1,2) & -c(2,2) & \cdots  & -c(t,2)\\
\vdots & \vdots & \vdots & \vdots \\
-c(1,s) & -c(2,s) & \cdots  & -c(t,s)\\
\end{matrix}
& \rvline  &  \bigzero &
\end{pmatrix}_{(s+t) \times (s+t)}.
\]
We write as  \[
A_\alpha
=
\left(
\begin{array}{c|c}
0 & A_{1\alpha}\\
\hline
-A_{1\alpha}^t & 0
\end{array}
\right).
\]
\\
 
\textbf{Fact 2}: 
An element $g=x_1^{m_1}x_2^{m_2}\ldots x_t^{m_t}y_1^{k_1}y_2^{k_2}\ldots y_s^{k_s},~ 0 \leq m_i, k_j \leq p-1$ of $G$ is an $\alpha$-regular element  if and only if $A_\alpha b=0 \pmod p $ for $b^T=[m_1,m_2, \ldots, m_t, k_1, k_2, \ldots, k_s]$. Observe that any element of the subgroup $\langle x_1^p \rangle \times \langle x_2^p \rangle \times\ldots \langle x_t^p\rangle$ is  an $\alpha$-regular element of $G$.
 
We denote the set of all $\alpha$-regular elements of $G$ by $G_0$.

\begin{thm} With the notations given above, the following holds.
\begin{enumerate}[label=(\roman*)]	
	\item $1 \leq |G/G_0| \leq p^{2\min\{s,t\}}$.
		\item For any $\rho \in \irr^\alpha(G)$, $1\leq dim(\rho) \leq p^{\min\{s,t\}}$.
				\item For $\rho \in \irr^\alpha(G)$,  dim $\rho=p^k$ if and only if rank $A_{1\alpha}=k$. 
						\item For any fix $\alpha$, all elements of $\irr^\alpha(G)$ have the same dimension.
		\item   $G$ is $\alpha$-central type if and only if $s=t$, $r_i=1, 1\leq i \leq t$ and $A_{1\alpha}$ has  rank $t$.
\end{enumerate}
\end{thm}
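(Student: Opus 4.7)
The plan is to reduce all five parts to a single arithmetic identity, namely $|G/G_0| = p^{2k}$ where $k = \operatorname{rank}(A_{1\alpha})$ over $\mathbb{Z}/p\mathbb{Z}$, and then apply two standard facts for abelian groups with a cocycle: the orthogonality relation $\sum_{\rho \in \irr^\alpha(G)} \dim(\rho)^2 = |G|$ and the equality $|\irr^\alpha(G)| = |G_0|$ (since conjugacy classes in an abelian group are singletons, and a singleton class is $\alpha$-regular precisely when its element lies in $G_0$).

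First I would compute $|G_0|$ using Fact 2. An element $g = x_1^{m_1}\cdots x_t^{m_t} y_1^{k_1}\cdots y_s^{k_s}$ is $\alpha$-regular iff its exponent vector $b = (m_1,\ldots,m_t,k_1,\ldots,k_s)^T$ satisfies $A_\alpha b \equiv 0 \pmod{p}$. Since this condition depends only on the residues modulo $p$, each solution in $(\mathbb{Z}/p\mathbb{Z})^{s+t}$ lifts to exactly $p^{(r_1-1)+\cdots+(r_t-1)}$ elements of $G$, giving
\[
|G_0| = p^{r_1+\cdots+r_t-t} \cdot |\ker A_\alpha|, \qquad |G/G_0| = p^{\operatorname{rank}(A_\alpha)}.
\]
Next I would observe that the block shape of $A_\alpha$ in Fact 1 makes $\operatorname{rank}(A_\alpha) = 2\operatorname{rank}(A_{1\alpha})$: the row space splits into rows supported on the last $s$ coordinates (coming from $[0 \mid A_{1\alpha}]$, of rank $\operatorname{rank}(A_{1\alpha})$) and rows supported on the first $t$ coordinates (coming from $[-A_{1\alpha}^T \mid 0]$, of rank $\operatorname{rank}(A_{1\alpha})$). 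Setting $k = \operatorname{rank}(A_{1\alpha})$ and using that $A_{1\alpha}$ is $t\times s$ so $k \leq \min(s,t)$, part (i) follows at once.

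For (iv) I would appeal to \cite[Theorem 2.21, p.~380]{GK3}, already cited in this section: any two elements of $\irr^\alpha(G)$ differ by tensoring with a linear character of $G$, so they share a common dimension $d$. Combining this with the two identities above yields $d^2 |G_0| = |G|$, whence $d^2 = |G/G_0| = p^{2k}$ and $d = p^k$, which establishes (ii) and (iii) simultaneously.

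Finally, for (v), $G$ is of $\alpha$-central type iff $|\irr^\alpha(G)| = 1$ iff $|G_0| = 1$ iff $|G| = p^{2k}$. Writing $|G| = p^{r_1+\cdots+r_t+s}$, the identity $r_1+\cdots+r_t+s = 2k \leq 2\min(s,t)$ combined with $r_i \geq 1$ forces $s = t$, all $r_i = 1$, and $k = t$, i.e.\ $\operatorname{rank}(A_{1\alpha}) = t$; the converse is immediate by counting. The only nonroutine ingredient is the reliance on the Karpilovsky result for (iv) to guarantee that all irreducible $\alpha$-representations share a common dimension; once that is in hand, the remainder is elementary linear algebra over $\mathbb{Z}/p\mathbb{Z}$ together with standard projective character identities.
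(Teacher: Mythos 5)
Your proposal is correct and takes essentially the same route as the paper: both reduce everything to the identity $|G/G_0|=p^{\operatorname{rank}A_\alpha}=p^{2\operatorname{rank}A_{1\alpha}}$ via Fact 2 and the block structure of $A_\alpha$, and both invoke \cite[Theorem 2.21, p.~380]{GK3} to get that all elements of $\irr^\alpha(G)$ share the common dimension $\sqrt{|G/G_0|}$. Your derivation of (v) by the counting identity $r_1+\cdots+r_t+s=2k$ is a minor variant of the paper's argument (which instead uses $\langle x_1^p\rangle\times\cdots\times\langle x_t^p\rangle\subseteq G_0$ to force $r_i=1$), but the content is the same.
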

	\begin{proof}
	Let $X=\prod_{i=1}^t x_i^{m_i} \in G$, where $m_i$ are multiple of $p$ for  $i=1,2, \ldots, t$. 
	Then  $\alpha(X,Y)=\alpha(Y,X)=1$ for all $Y \in G$. Hence the subgroup $S=\langle x_1^p \rangle \times \langle x_2^p \rangle \times \ldots  \times \langle x_t^p \rangle $  of $G$ is contained in $G_0$.
An element $g=x_1^{m_1}x_2^{m_2}\ldots x_t^{m_t}y_1^{k_1}y_2^{k_2}\ldots y_s^{k_s} \in G$, $1\leq m_i, k_i\leq p-1$ is  an $\alpha$-regular element of $G$ if and only if $A_\alpha b=0 \pmod p $ for $b^t=[m_1,m_2, \ldots, m_t, k_1, k_2, \ldots, k_s]$ (by Fact 2).	
	Hence
	\[
	 \text{rank} ~A_\alpha=(s+t)- \text{dim}~(\text{ker}~ A_\alpha)=2k, \text{ where }  \text{rank} ~A_{1\alpha}=k.
	 \]
Thus $|G/G_0|=p^{(s+t)- \text{dim}(\text{ker}~ A_\alpha)}=p^{2k}.$
It follows from \cite[Theorem 2.21, p. 380]{GK3} that, 
for each $\alpha$,  there are $|G_0|$ many irreducible $\alpha$-representations upto linear equivalence,  all of dimension $\sqrt{|G/G_0|}=p^k$. 
Hence $(iii)$ and  $(iv)$ follows.

Using the fact $0\leq k \leq \min\{s,t\}$, $(i)$ and  $(ii)$ follows.

$G$ is $\alpha$-central type  if and only if $G_0=1.$  Then we must have $S=1$, i.e., $r_i=1$ for all $i$. 
Now $\text{ker}~ A_\alpha=1$ if and only if $A_{1\alpha}$ is of full rank, i.e., $s=t$ and $A_{1\alpha}$ must be of rank $t$. Hence $(v)$  follows.
		\end{proof}

%
In the upcoming subsections, we describe different methods for constructing irreducible complex $\alpha$-representations of $G$.		
		
		\subsection{Rank $A_{1\alpha}=n, n \leq \min\{s,t\}$}\label{construction}
 Consider a bilinear cocycle  $\alpha$ of $G$ such that $A_{1\alpha}$ has rank $n$.  Let $[a_{i,j}]$ be the row reduced echelon form of $A_{1\alpha}$ such that the pivot elements  $1$ occur in the columns $k_1, k_2, \cdots, k_n$, $k_1<k_2\cdots <k_n$. 
Now using \textbf{Fact 2} we observe that the following subgroup of $G$ 
\begin{eqnarray*}
	&& 
	\prod_{j=1, j \neq k_1, k_2, \ldots, k_n}^s \Big\langle \prod_{l=1}^{n} y_{k_l}^{-a_{l,j}}y_j \Big\rangle 
\end{eqnarray*}
is contained in  $G_0$.
Since rank $A_\alpha=2n$, degree of irreducible $\alpha$-representations are $p^n$.
Consider the  subgroup $G_3=\langle  x_1, x_2, \ldots, x_t\rangle \times \prod_{j=1, j \neq k_1, k_2, \ldots, k_n}^s \langle \prod_{l=1}^{n} y_{k_l}^{-a_{l,j}}y_j \rangle$ of $G$ of  index $p^n$.

Let $X, X' \in \langle  x_1, x_2, \ldots, x_t\rangle $ and $Y, Y' \in \prod_{j=1, j \neq k_1, k_2, \ldots, k_n}^s \langle \prod_{l=1}^{n} y_{k_l}^{-a_{l,j}}y_j \rangle$. Since $Y,Y'$ are $\alpha$-regular elements of $G$,  we have 
\begin{eqnarray*}
\alpha(XY,X'Y')&=&\alpha(Y,X')=\alpha(X',Y)=1.
\end{eqnarray*}
Therefore
$	[\alpha]|_{G_3\times G_3}=1$.
Consider the trivial representation of $G_3$, say $\rho$.  The set $T=\{\prod_{i=1}^n y_{k_i}^{r_i}, 0\leq r_i \leq p-1\}$ is a transversal of $G_3$ in $G$. Now we prove the following lemma, and then using Theorem \ref{Mackey}, we have $\ind_{G_3}^G (\rho)$ is an irreducible $\alpha$-representation of $G$ of dimension $p^n$.

\begin{lemma} 
	$\rho^{(z)} \neq \rho$ for any $z \in T- \{1\}$.
\end{lemma}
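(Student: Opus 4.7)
The plan is to reduce the claim $\rho^{(z)} \neq \rho$ to a concrete statement about the pivot columns of $A_{1\alpha}$, which by construction are linearly independent.

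First I would simplify the expression for $\rho^{(z)}$. Since $G$ is abelian we have $z^{-1}xz = x$, and since $\rho$ is the trivial representation of $G_3$, $\rho(z^{-1}xz) = 1$. The defining formula
$$\rho^{(z)}(x) = \alpha^{-1}(xz, z^{-1})\alpha^{-1}(z, z^{-1}xz)\alpha(z, z^{-1})\rho(z^{-1}xz)$$
then collapses: applying the cocycle identity to the triple $(x, z, z^{-1})$ gives $\alpha(x, z)\alpha(xz, z^{-1}) = \alpha(x, 1)\alpha(z, z^{-1}) = \alpha(z, z^{-1})$, so that after substitution the stray $\alpha(z, z^{-1})$ factors cancel and we are left with
$$\rho^{(z)}(x) = \frac{\alpha(x, z)}{\alpha(z, x)}.$$
Showing $\rho^{(z)} \neq \rho$ therefore amounts to producing some $x \in G_3$ with $\alpha(x, z) \neq \alpha(z, x)$.

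The natural candidates are $x = x_\ell$ for $\ell \in \{1, \ldots, t\}$. Writing $z = \prod_{i=1}^{n} y_{k_i}^{r_i}$ with $(r_1, \ldots, r_n) \neq 0$ and substituting into the explicit formula \eqref{bilinea}, the second factor in $\alpha(x_\ell, z)$ has trivial $G_1$-part, so $\alpha(x_\ell, z) = 1$, while $\alpha(z, x_\ell) = \xi^{\sum_{i=1}^{n} c(\ell, k_i) r_i}$. Hence
$$\rho^{(z)}(x_\ell) = \xi^{-\sum_{i=1}^{n} c(\ell, k_i) r_i},$$
and the task reduces to finding some $\ell$ for which $\sum_{i=1}^{n} c(\ell, k_i) r_i \not\equiv 0 \pmod p$.

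This final step is pure linear algebra. By construction $k_1 < k_2 < \cdots < k_n$ are the pivot columns of the row reduced echelon form of $A_{1\alpha}$, so the corresponding $n$ columns of $A_{1\alpha}$ itself are linearly independent over $\mathbb{Z}/p\mathbb{Z}$. Equivalently, the $t \times n$ submatrix $B = [c(\ell, k_i)]$ has full column rank $n$, hence trivial kernel, so the nonzero vector $(r_1, \ldots, r_n)^T$ is not annihilated by $B$ and some row $\ell$ yields the required nonzero exponent. The only step that calls for any care is the cocycle manipulation that puts $\rho^{(z)}$ in the form $\alpha(x,z)/\alpha(z,x)$; after that, everything is a direct substitution into \eqref{bilinea} together with the pivot-column observation.
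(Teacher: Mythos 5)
Your proof is correct and follows essentially the same route as the paper: both reduce the claim to evaluating $\rho^{(z)}$ on elements of $\langle x_1,\ldots,x_t\rangle\subseteq G_3$ and then invoking the linear independence of the pivot columns $k_1,\ldots,k_n$ of $A_{1\alpha}$. Your version is slightly cleaner in two respects — you justify the simplification $\rho^{(z)}(x)=\alpha(x,z)/\alpha(z,x)$ via the cocycle identity rather than asserting it, and you use injectivity of the $t\times n$ pivot-column submatrix on the nonzero vector $(r_1,\ldots,r_n)^T$ instead of solving the linear system $Cx=b$ as the paper does, but these are two formulations of the same rank argument.
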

\begin{proof}
Let  $z=\prod_{i=1}^n y_{k_i}^{r_i}$ for $0 \leq r_i \leq p-1$. Since $z\neq 0$, there is a $q$ such that   $r_q \neq 0$. For $1 \leq i \leq n$, each column $k_i$  of the matrix $A_{1\alpha}$ contains an element, say $c(j_i,k_i)$, which is non-zero. So $\alpha(y_{k_i},x_{j_i})=\xi^{c(j_i,k_i)}\neq 1$.
We have $\rho^{(z)}(g)=\alpha^{-1}(z,g)\rho(g), g \in G$. 

To prove our result, it is enough to find an element $g$  such that $\alpha(z,g)\neq 1$.  If $g=\prod_{j=1}^t x_{j}^{m_j}$, then
\[
\alpha(z,g)=\alpha(\prod_{i=1}^n y_{k_i}^{r_i}, \prod_{j=1}^t x_{j}^{m_j})=\xi^{\sum_{j=1}^{t}c(j,k_1)r_1m_j+\sum_{j=1}^{t}c(j,k_2)r_2m_j+\sum_{j=1}^{t}c(j,k_n)r_nm_j}.
\]
Consider the $ n \times t$ matrix $C=[c_{ij}]$ such that $c_{ij}=c(j, k_i)r_i$. Then $C\neq 0$. Consider the system of equations $Cx=b$, where $x=[m_1, m_2, \ldots m_t]^T$ and $b=[0,  \ldots, 0, r_q,0 \ldots, 0]^T$. 
It is enough to show the existence of a solution of the system of equations $Cx=b$. If it exists, say  $x=[m_1', m_2', \ldots m'_t]^T$, then for $g=\prod_{j=1}^t x_{j}^{m'_j}$, we have  $\alpha(z,g)=\xi^{\text{sum of the elements of } b}=\xi^{r_q} \neq 1$.
\\
\textbf{claim:}  $Cx=b$ has a solution.\\
Observe that,  for $1\leq i \leq n$, the  $i$-th row of $C$ is the $r_i$ times $k_i$-th column of $A_{1\alpha}$. 
Let $B$ be the row reduced echelon form of $A_{1\alpha}$.
Since the $k_i$-th column of $B$ is the vector $\underbrace{[0,  \ldots, 0, 1,0 \ldots, 0]^T}_\textrm{1 in the i-th position}$,  the column space of $C$ will be spanned by the set $\{[0,  \ldots, 0, r_i,0 \ldots, 0]^T, 1 \leq i \leq n\}$. Hence the result follows.
\end{proof}

Now we discuss the following examples as an application of the above construction.
\begin{example}\label{example1}
Let $G=\mathbb Z/p^{r_1}\mathbb Z \times \mathbb Z/p^{r_2}\mathbb Z \times \mathbb Z/p\mathbb Z \times \mathbb Z/p\mathbb Z =\langle x_1\rangle \times \langle x_2\rangle \times \langle y_1\rangle \times  \langle y_2 \rangle $. 

(1) Consider a bilinear cocycle of the form
\[
\alpha(x_1^{m_1}x_2^{m_2}y_1^{k_1}y_2^{k_2}, x_1^{m_1'}x_2^{m_2'}y_1^{k_1'}y_2^{k_2'} )=\xi^{m_1'(k_1+k_2)}, \xi=e^{\frac{2\pi i}{p}}.
\]
Then $A_{1\alpha}=\begin{pmatrix}1&1\\0&0\end{pmatrix}$, and it is easy to check that
\[
G_0=\langle x_1^p, x_2, y_1y_2^{-1} \rangle.
\]
By section \ref{construction}, consider $G_3=\langle x_1, x_2, y_1y_2^{-1} \rangle$, and $T=\{y_1^{r}, 0 \leq r \leq p-1\}$ is a transversal of $G_3$ in $G$. Then $\tau=\mathrm{Ind}_{G_3}^G(\rho)$ is an irreducible $\alpha$-representation of $G$ of dimension $p$ which is defined on the generators as a $p\times p$ matrix  as follows.\\

$\tau(x_1)$ is a diagonal matrix with $\tau(x_1)_{mm}=\xi^{-(m-1)}, 1\leq m \leq p$, $\tau(x_2)=I_p$, and
 \begin{equation*}
	 \tau(y_i)_{mn}  = \begin{cases}
	1 &\text{if  $m-n=1 \pmod p,$}\\
	0 &\text{otherwise.}
	\end{cases}
	\end{equation*}
 
 \bigskip
 
 (2) Consider another bilinear cocycle of $G$
\[
\alpha(x_1^{m_1}x_2^{m_2}y_1^{k_1}y_2^{k_2}, x_1^{m_1'}x_2^{m_2'}y_1^{k_1'}y_2^{k_2'} )=\xi^{(m_1'+m_2')k_1}, \xi=e^{\frac{2\pi i}{p}}.
\]
Then Then $A_{1\alpha}=\begin{pmatrix}1&0\\1&0\end{pmatrix}$ and 
\[
G_0=\langle x_1^p, x_2^p, x_1x_2^{-1}, y_2 \rangle.
\]
By section \ref{construction}, consider $G_3=\langle x_1, x_2, y_2 \rangle$, and $T=\{y_1^{r}, 0 \leq r \leq p-1\}$ is a transversal of $G_3$ in $G$. Then $\tau=\mathrm{Ind}_{G_3}^G(\rho)$ is an irreducible $\alpha$-representation of $G$ of dimension $p$ which is defined on the generators as a $p\times p$ matrix  as follows.\\

$\tau(x_i)$ is a diagonal matrix with $\tau(x_i)_{mm}=\xi^{-(m-1)}, 1\leq m \leq p$,	$ \tau(y_2)=I_p$ and
 \begin{equation*}
	 \tau(y_1)_{mn}  = \begin{cases}
	1 &\text{if  $m-n=1 \pmod p,$}\\
	0 &\text{otherwise.}
	\end{cases}
	\end{equation*}

\end{example}

\bigskip

\subsection{$s=t$, each row and each column of $A_{1\alpha}$ contains exactly one  non-zero element. }\label{s=t}
Using the above method we can construct an irreducible $\alpha$-representation in this case. 
Now we discuss an alternate method here, which enable us to write the $\alpha$-representation explicitly; see Theorem \ref{alternate}. 

Consider the bilinear cocycles $\alpha$ such that $c(i,j_i)\neq 0, 1\leq i \leq t$ and $j_1, j_2, \cdots,j_t$ are all distinct.
Write $G$ as 
\begin{eqnarray*}
	G&=&
	\big(\mathbb Z/p^{r_1}\mathbb Z \times \mathbb Z/p\mathbb Z \big)  \times 
	\big(\mathbb Z/p^{r_2}\mathbb Z \times  \mathbb Z/p\mathbb Z \big)  \times 
	\cdots \times 
	\big(\mathbb Z/p^{r_t}\mathbb Z \times \mathbb Z/p\mathbb Z  \big) \\
	&=&\big(\langle x_1 \rangle  \times  \langle y_{j_1} \rangle \big)
	\times \big(\langle x_2 \rangle \times \langle y_{j_2}  \rangle \big) \times  \cdots \times 
	\big(\langle x_t \rangle  \times  \langle y_{j_t} \rangle\big ).
\end{eqnarray*}
Then each such $\alpha$ is cohomologous to the cocycles of the form
\begin{eqnarray*}\label{bilinear1}
	&& \alpha\big(x_1^{m_1}x_2^{m_2}\ldots x_t^{m_t}y_{j_1}^{k_{j_1}}y_{j_2}^{k_{j_2}}\ldots y_{j_t}^{k_{j_t}},
	x_1^{m'_1}x_2^{m'_2}\ldots x_t^{m'_t}y_{j_1}^{k'_{j_1}}y_{j_2}^{k'_{j_2}}\ldots y_{j_t}^{k'_{j_t}}\big)\\
	=&& \prod_{1 \leq i \leq t} \xi^{c(i,j_i)m_i'k_{j_i}} \text{ for  } 0 \leq c(i,j_i) \leq p-1.
\end{eqnarray*}
Our aim is to construct an irreducible $\alpha$-representation of $G$.
In this case all the irreducible $\alpha$-representations are $p^t$ dimensional.

For $1\leq i \leq t$, 
consider the subgroups $H_i=\mathbb Z/p^{r_i}\mathbb Z \times \mathbb Z/p\mathbb Z =\langle x_i \rangle  \times  \langle y_{j_i} \rangle$ of $G$.  	Every cocycle of $H_i$ are cohomologous to the cocycles of the form
\[
\beta(x_i^{l}y_{j_i}^{q},x_i^{l'}y_{j_i}^{q'})=\alpha|_{H_i \times H_i}(x_i^{l}y_{j_i}^{q},x_i^{l'}y_{j_i}^{q'})=\xi^{c(i,j_i){l'q}}, 0\leq  c(i,j_i) \leq (p-1),
\]
follows from \cite[Lemma 2.2$(i)$]{PS}.
In the next result  we describe an irreducible $\beta$-representation of $H_i$. 
\begin{thm}\label{alternate}
	For a non-trivial  cocycle $\beta$ of $H_i, 1 \leq i \leq t$, there is an irreducible $\beta$-representation $\Gamma_i$ of $H_i$ of dimension $p$  which is defined on the generators as $p \times p$ matrix by the following: for $1\leq m, n \leq p$,
	\begin{equation*}
	\Gamma_i(x_i)_{mn} = \begin{cases}
	\xi^{-(m-1)c(i,j_i)} &\text{if ~ $m=n,$}\\
	0 &\text{otherwise.}
	\end{cases}
	\end{equation*}
	\begin{equation*}
	\Gamma_i(y_{j_i})_{mn}  = \begin{cases}
	1 &\text{if  $m-n=1 \pmod p,$}\\
	0 &\text{otherwise.}
	\end{cases}
	\end{equation*}
\end{thm}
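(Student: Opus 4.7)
The plan is to verify that $\Gamma_i$ extends to an irreducible $\beta$-representation of $H_i$. Write $c := c(i, j_i)$, $D := \Gamma_i(x_i)$, $S := \Gamma_i(y_{j_i})$. I will (a) extend $\Gamma_i$ to all of $H_i$ and check the cocycle relation on products of generators, and (b) establish irreducibility via Schur's lemma.

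For step (a), set $\Gamma_i(x_i^l y_{j_i}^q) := D^l S^q$ for $0 \leq l < p^{r_i}$ and $0 \leq q < p$. The definition is unambiguous because $D^{p^{r_i}} = I$ (each diagonal entry of $D$ is a $p$-th root of unity and $p \mid p^{r_i}$) and $S^p = I$ ($S$ being a cyclic shift of period $p$). A direct calculation on the standard basis vectors yields the Weyl--Heisenberg commutation $S D = \xi^c D S$, and iteration gives $S^q D^{l'} = \xi^{c q l'} D^{l'} S^q$. Therefore
\[
\Gamma_i(x_i^l y_{j_i}^q)\, \Gamma_i(x_i^{l'} y_{j_i}^{q'}) \;=\; D^l S^q D^{l'} S^{q'} \;=\; \xi^{c l' q}\, D^{l+l'} S^{q+q'} \;=\; \xi^{c l' q}\, \Gamma_i(x_i^{l+l'} y_{j_i}^{q+q'}),
\]
which matches the $\beta$-cocycle identity since $\beta(x_i^l y_{j_i}^q, x_i^{l'} y_{j_i}^{q'}) = \xi^{c l' q}$.

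For step (b), Schur's lemma reduces irreducibility to showing that any $p \times p$ matrix $M$ commuting with both $D$ and $S$ is a scalar multiple of the identity. Since $\beta$ is non-trivial, $c \not\equiv 0 \pmod{p}$, so the diagonal entries $\xi^{-(m-1)c}$ of $D$ for $m = 1, \ldots, p$ are pairwise distinct; hence $M$ commuting with $D$ forces $M$ to be diagonal. Because $S$ cyclically permutes the standard basis, conjugation by $S$ cyclically permutes the diagonal entries of any diagonal matrix, so further commuting with $S$ forces all these entries to be equal. Therefore $M$ is scalar, and $\Gamma_i$ is irreducible.

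The delicate point is the bookkeeping in (a): the factor $\xi^{c q l'}$ produced by commuting $D^{l'}$ past $S^q$ must exactly match the $\xi^{c l' q}$ prescribed by $\beta$. The two agree by commutativity of the exponent, so no coboundary correction is needed. The order computations for $D$ and $S$, the distinctness of the eigenvalues of $D$, and the cyclic permutation action of $S$ on the basis are all elementary linear algebra.
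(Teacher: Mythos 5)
Your proof is correct, but it takes a genuinely different route from the paper. The paper obtains $\Gamma_i$ as an induced representation: since $\beta$ restricts trivially to $\langle x_i\rangle$, it induces the trivial representation $\rho$ of $\langle x_i\rangle$ up to $H_i$, checks that $\rho^{(y_{j_i}^k)}(x_i)=\xi^{-c(i,j_i)k}\neq 1$ for $k\neq 0$, and invokes Mackey's irreducibility criterion (Theorem \ref{Mackey}); the explicit matrices are then read off from the induction formula. You instead verify everything directly: the Weyl--Heisenberg relation $SD=\xi^{c}DS$ gives the multiplication law $D^lS^qD^{l'}S^{q'}=\xi^{cql'}D^{l+l'}S^{q+q'}$, and irreducibility follows from a commutant computation (commuting with $D$, whose diagonal entries are distinct because $c\not\equiv 0\pmod p$, forces diagonality; commuting with the cyclic shift $S$ forces the diagonal to be constant). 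Your approach has the advantage of actually confirming that the displayed matrices satisfy the cocycle identity, a step the paper leaves implicit after ``Hence result follows,'' at the cost of redoing by hand what Mackey's criterion packages. One small caveat: your identity $\Gamma(g)\Gamma(h)=\beta(g,h)\Gamma(gh)$ follows Karpilovsky's convention, whereas the paper's introduction states $\rho(gh)=\alpha(g,h)\rho(g)\rho(h)$; under the latter the displayed matrices form a $\beta^{-1}$-representation. Since the paper's own induction and Mackey formulas are quoted from Karpilovsky and hence use the same convention you do, and since $\xi^{-cl'q}$ ranges over the same set of non-trivial bilinear cocycles as $\xi^{cl'q}$, this is a labeling discrepancy inherited from the paper rather than a gap in your argument, but it is worth flagging which convention you are using.
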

\begin{proof}
The cocycle $\beta$ represents the matrix $A_\beta=\left(
 \begin{array}{c c}
 0 & c(i,j_i)\\
 -c(i,j_i) & 0
 \end{array}
 \right)$. Now $\beta |_{\langle x_i \rangle \times \langle x_i \rangle}=1.$ Consider the trivial representation $\rho$ of $\langle x_i \rangle$. $T=\{y_{j_i}^{k}, 0 \leq k \leq p-1\}$ is a transversal of $ \langle x_i \rangle$ in $H_i$, and $\rho^{(y_{j_i}^{k})}(x_i)=\beta^{-1}(y_{j_i}^k, x_i)=\xi^{-c(i,j_i)k} \neq 1$ for $k \neq 0$. Hence, by Theorem \ref{Mackey}, 
 $\Gamma_i= \ind^{H_i}_{\langle x_i \rangle }(\rho)$ is an irreducible $\beta$-representation of $H_i$. Hence result follows.

%
\end{proof}
To construct an irreducible $\alpha$-representation of $G$, consider the irreducible $\beta=\alpha|_{H_i \times H_i}$-representations $\Gamma_i$ of $H_i$. Then the irreducible $\alpha$-representation $\gamma$ of $G$ is defined on the generators as $p^t \times p^t$ matrix by
\[
\gamma(x_i) =\Gamma_1(1) \otimes  \cdots \otimes  \Gamma_i(x_i)  \otimes \cdots   \otimes\Gamma_t(1),~1\leq i \leq t,
\]
\[
\gamma(y_{j_i}) =\Gamma_1(1) \otimes \cdots \Gamma_i(y_{j_i})\otimes \cdots   \otimes\Gamma_t(1).
\]

\bigskip

\begin{example}
Let $G=\mathbb Z/4\mathbb Z \times \mathbb Z/4\mathbb Z \times \mathbb Z/2\mathbb Z \times \mathbb Z/2\mathbb Z =\langle x_1\rangle \times \langle x_2\rangle \times \langle y_1\rangle \times  \langle y_2 \rangle $. Consider a bilinear cocycle of the form
\[
\alpha(x_1^{m_1}x_2^{m_2}y_1^{k_1}y_2^{k_2}, x_1^{m_1'}x_2^{m_2'}y_1^{k_1'}y_2^{k_2'} )=(-1)^{m_1'k_2+m_2'k_1}.
\]
Then it is easy to check that
\[
G_0=\langle x_1^2, x_2^2 \rangle.
\]
Using the construction of section \ref{s=t},  there is an irreducible $\alpha$-representation of $G$ of dimension $4$ which is defined on the generators as a $4\times 4$ matrix  as follows.\\

 $\gamma(x_1)= diag(1,-1)\otimes I_2$,  $\gamma(x_2)=  I_2 \otimes diag(1,-1)$, \\
 
 $\gamma(y_1)= I_2 \otimes \begin{bmatrix}
    0 & 1  \\
    1 &  0
  \end{bmatrix}$, $\gamma(y_2)=
  \begin{bmatrix}
    0 & 1  \\
    1 &  0
  \end{bmatrix} \otimes I_2$.

\end{example}

\bigskip

\subsection{General set up}\label{general}
If $G=G_1 \times G_2$ such that  $G_1, G_2$ are any abelian $p$-groups of rank $t$ and  $s$ respectively. Write 
\[
G_1=\langle x_1 \rangle \times \langle x_2 \rangle \times \ldots  \times \langle x_t \rangle \cong \mathbb Z/p^{r_1}\mathbb Z \times \mathbb Z/p^{r_2}\mathbb Z \times \cdots \times  \mathbb Z/p^{r_t}\mathbb Z 
\]
such that  $r_1 \geq r_2 \ldots \geq r_t \geq 1$ and
\[
G_2=\langle y_1 \rangle \times \langle y_2 \rangle \times \ldots  \times \langle y_s \rangle \cong \mathbb Z/p^{m_1}\mathbb Z \times \mathbb Z/p^{m_2}\mathbb Z \times \cdots \times  \mathbb Z/p^{m_s}\mathbb Z 
\]
such that  $m_1 \geq m_2 \ldots \geq m_s \geq 1$. There is a normal subgroup $N$ of $G_2$ such that $G_1\times (G_2/N) \cong G_1\times (\mathbb Z/p\mathbb Z )^s$.
Consider the corresponding inflation map $$\inf: \Ho^2(G_1\times G_2/N , \mathbb C^\times) \to \Ho^2(G_1\times G_2, \mathbb C^\times).$$
Suppose $\alpha$ is a   bilinear cocycle  of $G$ such that $[\alpha]$ is of order $p$.  
Then using  \eqref{bilinearcocycle} we can write $\alpha$ explicitly,  
and  it follows that $[\alpha]=\inf ([\beta])$ such that $\beta$ is a bilinear cocycle of $G_1\times (\mathbb Z/p\mathbb Z )^s$ of order $p$ of the form \eqref{bilinea}.
Then one can construct an irreducible $\alpha$-representation of $G$ by constructing an irreducible $\beta$-representation of $G_1\times (\mathbb Z/p\mathbb Z )^s$, follows from \cite[Theorem 3.2]{PS}. The construction of an $\beta$-representation of $G_1\times (\mathbb Z/p\mathbb Z )^s$ is given in section \ref{construction}.


 \bigskip

\begin{example}
Let $G=\mathbb Z/p^2\mathbb Z \times \mathbb Z/p^2\mathbb Z \times \mathbb Z/p^2\mathbb Z \times \mathbb Z/p^2\mathbb Z=\langle x_1\rangle \times \langle x_2\rangle \times \langle y_1\rangle \times  \langle y_2 \rangle $. 

(1) Consider a bilinear cocycle of the form
\[
\alpha(x_1^{m_1}x_2^{m_2}y_1^{k_1}y_2^{k_2}, x_1^{m_1'}x_2^{m_2'}y_1^{k_1'}y_2^{k_2'} )=\xi^{m_1'(k_1+k_2)}, \xi=e^{\frac{2\pi i}{p}}.
\]
Now we define the map $\tau$ on the generators of $G$ as defined in Example \ref{example1} (1). Then $\tau \in \irr^\alpha(G)$.

 \bigskip

(2) Now if we consider another bilinear cocycle of $G$  of the form
\[
\alpha(x_1^{m_1}x_2^{m_2}y_1^{k_1}y_2^{k_2}, x_1^{m_1'}x_2^{m_2'}y_1^{k_1'}y_2^{k_2'} )=\xi^{k_1(m_1'+m_2')}, \xi=e^{\frac{2\pi i}{p}}.
\]
Define the map $\tau$ on the generators of $G$ as defined in Example \ref{example1} (2). Then $\tau \in \irr^\alpha(G)$.

\end{example}

\section{Restriction of the $\alpha$-representations on subgroups of index  $\leq p^2$}\label{restriction}
Let $Z(p,m,n,r)$ denotes the number of matrices in $M_{m\times n}(\mathbb Z/p\mathbb Z)$ of rank $r$. Then  
$$
Z(p,m,n,r)=\Big(\prod_{j=1}^{r}(p^n-p^{j-1}) \Big)\Big(\sum_{\substack{j_1, j_2, \ldots j_r \in \mathbb{Z}_{\geq 0}, \\ j_1+j_2+\cdots +j_r \leq m-r}}p^{\sum_{i=1}^r i j_i} \Big).
$$
Suppose $G=G_1\times G_2$ is given in  \eqref{elementaryabelian}, and $\alpha$ is a bilinear cocycle of $G$ of the form \eqref{bilinea}. Let $H$ be a subgroup of $G$ of index $\leq p^2$. In this section,  we count the number of cohomology classes of $[\alpha]$ of $G$ such that the restriction $\rho |_H$ behaves in the same way on $H$.

Observe that, if $H$ is a subgroup of $G_2$, then $\alpha|_{H\times H}$ is trivial, and hence the elements of $\mathrm{Irr}^{\alpha |_{H \times H}}(H)$ are one dimensional. 
Thus, this case is easy to handle. 
Hence, in the following results (Theorem \ref{indexp} and Theorem \ref{indexp2}), we consider $H$ to be a subgroup $G$ such that $H \not\subset G_2$.
\\

If $H$ is a subgroup of index $p$ of the group $G$  such that $H \not\subset G_2$, then, up to isomorphism, $H$ has one of the following forms.

$(i)$~There is a  $i$ such that $H=\langle x_i^{p} \rangle \times \prod_{k=1, k\neq i}^t \langle x_k \rangle \times G_2$. \hspace{5cm} (X)\\

$(ii)$~There is a  $j$ such that  $H=G_1 \times \prod_{r=1, r\neq j}^s \langle y_r \rangle $. 
 \hspace{6cm} (Y)\\

\begin{thm}\label{indexp}
Suppose $G=G_1\times G_2$ is of the form \eqref{elementaryabelian}, and $H$ is a subgroup of $G$ of index $p$. Let $\alpha$ be a bilinear cocycle of $G$ and $\rho\in \irr^\alpha(G)$. Then  the following holds.
	\begin{enumerate}
%
%
%
%

\item  Suppose $H$ is of the form $(X)$. Then
		
\begin{enumerate}[label=(\roman*)]
		\item there are $p^{n}Z(p,t-1,s,n)$ many cohomology classes $[\alpha]$  of $G$ such that $\rho |_H$ is irreducible and has degree $p^n$ for $n=0,1,2,\ldots, \min\{t-1,s\}$. 
		
		\item there are $p^{n}(p^{s-n}-1)Z(p,t-1,s,n)$ many cohomology classes  $[\alpha]$  of $G$ such that $\rho$ is $p^{n+1}$ dimensional and $$\rho |_H\cong   \oplus_{i=1}^{p} \rho_i,$$
	for $p$  distinct elements $\rho_i\in \irr^{\alpha |_{H\times H}}(H)$   for $n=0,1,2,\ldots, \min\{s,t\}-1$. 
	\end{enumerate}

\item  Suppose $H$ is of the form $(Y)$. Then
		
\begin{enumerate}[label=(\roman*)]
		\item there are $p^{n}Z(p,t,s-1,n)$ many cohomology classes $[\alpha]$  of $G$ such that $\rho |_H$ is irreducible and has degree $p^n$ for $n=0,1,2,\ldots, \min\{t,s-1\}$. 
		
		\item there are $p^{n}(p^{t-n}-1)Z(p,t,s-1,n)$ many cohomology classes  $[\alpha]$  of $G$ such that $$\rho |_H\cong   \oplus_{i=1}^{p} \rho_i,$$
	for $p$ distinct elements $\rho_i\in \irr^{\alpha |_{H\times H}}(H)$  for $n=0,1,2,\ldots, \min\{s,t\}-1$. 
	\end{enumerate}

	\end{enumerate}
%
%
%
%
\end{thm}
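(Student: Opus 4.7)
The plan is to translate the statement into a counting problem on matrices $A_{1\alpha}\in M_{t\times s}(\mathbb{Z}/p\mathbb{Z})$ via Fact~1 and to apply the dimension formula of Theorem~3.1 combined with Clifford theory for the normal subgroup $H\triangleleft G$ of prime index $p$. Since Fact~1 puts bilinear cohomology classes in bijection with such matrices, counting matrices with a prescribed rank property is the same as counting classes $[\alpha]$.

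First I would pin down the cocycle matrix of $\alpha|_{H\times H}$. When $H$ has the form $(X)$, write $H=H_1\times G_2$ with $H_1=\langle x_i^p\rangle\times\prod_{k\ne i}\langle x_k\rangle$; the restriction $\alpha|_{H\times H}$ is again bilinear in the sense of \eqref{bilinea}, and since $\alpha(x_i^p,y_j)=\xi^{p\,c(i,j)}=1$ the generator $x_i^p$ contributes only a zero row. Hence $A_{1(\alpha|_H)}$ is exactly the $(t-1)\times s$ matrix $A'_{1\alpha}$ obtained from $A_{1\alpha}$ by deleting the $i$-th row. The symmetric argument for type $(Y)$ shows $A_{1(\alpha|_H)}$ is the $t\times(s-1)$ matrix $A''_{1\alpha}$ obtained by deleting the $j$-th column.

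Next, applying Theorem~3.1(iii)-(iv) to both $G$ and $H$, every element of $\irr^\alpha(G)$ has dimension $p^{k}$ with $k=\mathrm{rank}\,A_{1\alpha}$ and every element of $\irr^{\alpha|_H}(H)$ has dimension $p^{\ell}$ with $\ell=\mathrm{rank}\,A'_{1\alpha}$ (resp.\ $\mathrm{rank}\,A''_{1\alpha}$). Clifford theory, together with Mackey's criterion (Theorem~\ref{Mackey}) for the normal subgroup $H$ of prime index $p$, leaves only two possibilities: either $\rho|_H$ is irreducible, which by dimension forces $k=\ell$; or $\rho|_H\cong\bigoplus_{i=1}^p\rho_i$ with the $\rho_i$ distinct, which forces $k=\ell+1$. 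Translated into linear algebra, the first case says the deleted row (resp.\ column) lies in the span of the remaining rows (resp.\ columns), and the second case is its negation.

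Finally, I would tally the cohomology classes. For $(1)(i)$, choose $A'_{1\alpha}\in M_{(t-1)\times s}(\mathbb{Z}/p\mathbb{Z})$ of rank $n$ in $Z(p,t-1,s,n)$ ways and place the $i$-th row in the row span of $A'_{1\alpha}$, which gives $p^n$ further choices, for a total of $p^nZ(p,t-1,s,n)$. For $(1)(ii)$, the $i$-th row instead lies outside the row span, contributing $p^s-p^n=p^n(p^{s-n}-1)$ choices. Parts $(2)(i)$ and $(2)(ii)$ follow identically after exchanging the roles of rows and columns, now with $A''_{1\alpha}\in M_{t\times(s-1)}(\mathbb{Z}/p\mathbb{Z})$ and $p^t-p^n=p^n(p^{t-n}-1)$ in the split case. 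The main obstacle is the first step, namely identifying cleanly that the matrix of $\alpha|_{H\times H}$ is exactly a row- or column-deletion of $A_{1\alpha}$ and that the cohomology-class-to-matrix bijection still holds after restriction; once that is in hand, the remaining steps are an application of Theorem~3.1 and elementary linear-algebra bookkeeping.
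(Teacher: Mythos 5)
Your proposal is correct and follows essentially the same route as the paper: identify the matrix of $\alpha|_{H\times H}$ as the row-deleted (resp.\ column-deleted) submatrix of $A_{1\alpha}$, invoke the rank--dimension correspondence together with Clifford theory for the prime-index normal subgroup $H$ to reduce the dichotomy to whether the rank increases, and then count the extensions of a fixed rank-$n$ matrix by one row (resp.\ column) according to whether it lies in the span of the others. The paper carries out this last count via an explicit normal form $PA_{1\beta}Q$ and row operations, but that is the same computation as your ``$p^n$ rows in the row span versus $p^s-p^n$ outside it'' bookkeeping.
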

\begin{proof}

$(1)$ 
Suppose $H$ is of the form $(X)$.
 Let $\beta=\alpha|_{H\times H}$,
and $A_{1\beta}\in M_{(t-1)\times s}(\mathbb Z/p\mathbb Z)$  be of rank $n$ for  $n \leq \min\{s,t-1\}$. 
Let $A_{\beta}$ be a matrix of the form 
$
A_\beta
=
\left(
\begin{array}{c|c}
0 & A_{1\beta}\\
\hline
-A_{1\beta}^t & 0
\end{array}
\right).
$
Clearly  the result is true for $s+t \leq  2$.
	We assume that $s+t \geq 3$.
	There exist invertible matrices $P\in M_{(t-1)\times (t-1)}(\mathbb Z/p\mathbb Z)$ and $Q \in M_{s\times s}(\mathbb Z/p\mathbb Z)$ such that
	\[
PA_{1\beta} Q=
	\begin{pmatrix}
	 I_{n \times n} & 0 \\
	  0  &  0
	\end{pmatrix}.
	\]
	Let 
	$
	A_\alpha
	=
	\left(
	\begin{array}{c|c}
	0 & A_{1\alpha}\\
	\hline
	-A_{1\alpha}^t & 0
	\end{array}
	\right)
\in M_{(s+t)\times (s+t)}(\mathbb Z/p\mathbb Z)
$, where
	$A_{1\alpha}=
	\left(
	\begin{array}{c}
	 A_{1\beta} \\
	  D
	\end{array}
	\right)$ for  $D= 	\left(
	\begin{array}{c|c}
D_1 & 
	 D_2
	\end{array}
	\right)$ such that $D_1\in M_{1\times n}(\mathbb Z/p\mathbb Z), D_2\in M_{1 \times (s-n)}(\mathbb Z/p\mathbb Z)$.  
Then, 
we have
	\[
\left(
	\begin{array}{c|c}
	P & 0\\
	\hline
	0 & 1
	\end{array}
	\right)^T A_{1\alpha}Q
	=	
	\begin{pmatrix}
		 I_{n \times n} & 0  \\
	  0  &  0  \\
	  X  &  Y
	\end{pmatrix},
	\]
	where $X \in M_{1\times n}(\mathbb Z/p\mathbb Z)$, $Y \in M_{1\times (s-n)}(\mathbb Z/p\mathbb Z)$.
	After performing some row  operations on this matrix, we obtain
\[
\begin{pmatrix}
 I_{n \times n} & 0  \\
  0  &  0  \\
  0  &  Y 
\end{pmatrix}.
\]
Now the following two cases occur here.

$(i)$ Rank $(A_{1\alpha})=n$. In this case, $Y$ has to be the zero matrix. 
So, for a fix $A_{1\beta}$, the number of choices of $X, Y$  is $p^n$. \\

$(ii)$ Rank $(A_{1\alpha})=(n+1)$, provided $n \leq \min\{s,t-1\}, n+1 \leq \min\{s,t\}$. In this case, $Y$ is not the zero matrix. However, for a fix $X$, the number of choices of row $Y$ is  $p^{s-n}-1$.  
So, for a fix $A_{1\beta}$, the number of choices of $X, Y$  is $p^n(p^{s-n}-1)$. \\

Now result follows from \cite[Theorem 1.1, p. 268]{GK}.\\

$(2)$ 
If $H$ is of the form $(Y)$, then proof goes on the same lines as above by considering 
$A_{1\beta}\in M_{t\times (s-1)}(\mathbb Z/p\mathbb Z)$ and
$A_{1\alpha}=
	\left(
	\begin{array}{c| c}
	 A_{1\beta} &
	  D
	\end{array}
	\right)$ for  $D= 	\left(
	\begin{array}{c}
D_1\\ D_2
	\end{array}
	\right)$ such that $D_1\in M_{n\times 1}(\mathbb Z/p\mathbb Z), D_2\in M_{(t-n) \times 1}(\mathbb Z/p\mathbb Z)$.

%
%
%
%
\end{proof}

If $H$ is a subgroup of index $p^2$ of the group $G$ given in  \eqref{elementaryabelian} such that $H \not\subset G_2$, then upto isomorphism, $H$ has one of the following forms.

$(i)$~There are  $i, j$ with $i \neq j$ such that $H=\langle x_i^p \rangle \times \langle x_j^p \rangle \times \prod_{k=1, k\neq i,j}^t \langle x_k \rangle \times G_2$ provided $t>1$.  \hspace{13.6cm}  (A)\\

$(ii)$~There is a  $i$ such that $H=\langle x_i^{p^2} \rangle \times \prod_{k=1, k\neq i}^t \langle x_k \rangle \times G_2$. 
 \hspace{5cm} (B)\\

$(iii)$~There are  $i, j$ such that $H=  \langle x_i^p \rangle \times \prod_{k=1, k\neq i}^t \langle x_k \rangle \times \prod_{r=1, r\neq j}^s \langle y_r \rangle $.  \hspace{3cm} (C)\\

\begin{thm}\label{indexp2}
	Suppose $G=G_1\times G_2$, of the form \eqref{elementaryabelian} and $H$ is a subgroup of $G$ of index $p^2$. Let $\alpha$ be a non-trivial bilinear cocycle of $G$ and $\rho\in \irr^\alpha(G)$. Then  the following holds.
	\begin{enumerate}
%

	\item If $H$ is of the form $(A)$, then 
		\begin{enumerate}[label=(\roman*)]
	\item there are $p^{2n}Z(p,t-2,s,n)$ many cohomology classes $[\alpha]$  of $G$ such that $\rho |_H$ is irreducible and has degree $p^n$ for $n=0,1,2,\ldots,\min\{t-2,s\}.$

	\item   there are $p^{2n}(p+1)(p^{s-n}-1)Z(p,t-2,s,n)$ many cohomology classes $[\alpha]$  of $G$ such that $\rho$ is $p^{n+1}$ dimensional and 
	$$\rho |_H\cong   \oplus_{i=1}^{p} \rho_i,$$
	for $p$ distinct elements $ \rho_i \in\irr^{\alpha |_{H\times H}}(H)$ for $n=0,1,2,\ldots, \min\{t-1,s\}-1.$
		
	\item  there are $p^{2n}(p^{s-n}-1)(p^{s-n}-p) Z(p,t-2,s,n)$ many cohomology classes $[\alpha]$  of $G$ such that $\rho$ is $p^{n+2}$ dimensional and $$\rho |_H\cong   \oplus_{i=1}^{p^2} \rho_i,$$
	for $p^2$ distinct elements $ \rho_i \in\irr^{\alpha |_{H\times H}}(H)$ for $n=0,1,2,\ldots, \min\{t,s\}-2.$
	
%

			\end{enumerate}
			
			\item If $H$ is of the form $(B)$, then 		
			\begin{enumerate}[label=(\roman*)]
	\item there are $p^{n}Z(p,t-1,s,n)$ many cohomology classes $[\alpha]$  of $G$ such that $\rho |_H$ is irreducible and has degree $p^n$ for $n=0,1,2,\ldots,\min\{t-1,s\}.$

	\item   there are $p^{n}(p^{s-n}-1)Z(p,t-1,s,n)$ many cohomology classes $[\alpha]$  of $G$ such that $\rho$ is $p^{n+1}$ dimensional and $$\rho |_H\cong   \oplus_{i=1}^{p} \rho_i,$$
	for $p$ distinct elements $ \rho_i \in\irr^{\alpha |_{H\times H}}(H)$ for $n=0,1,2,\ldots, \min\{t,s\}-1.$
			\end{enumerate}
			
					\item If $H$ is of the form $(C)$, then 
		\begin{enumerate}[label=(\roman*)]
	\item there are $p^{2n}Z(p,t-1,s-1,n)$ many cohomology classes $[\alpha]$  of $G$ such that $\rho |_H$ is irreducible and has degree $p^n$ for $n=0,1,2,\ldots,\min\{t-1,s-1\}.$

	\item   there are $p^{2n+1}[(p^{t-n-1}-1)+(p^{s-n-1}-1)]Z(p,t-1,s-1,n)$ many cohomology classes $[\alpha]$  of $G$ such that $\rho$ is $p^{n+1}$ dimensional and $$\rho |_H\cong   \oplus_{i=1}^{p} \rho_i,$$
	for $p$ distinct elements $ \rho_i \in\irr^{\alpha |_{H\times H}}(H)$ for $n=0,1,2,\ldots,\min\{t,s\}-1.$
	
	\item  there are $p^{2n+1}(p^{t-n-1}-1)(p^{s-n-1}-1) Z(p,t-1,s-1,n)$ many cohomology classes $[\alpha]$  of $G$ such that $\rho$ is $p^{n+2}$ dimensional and $$\rho |_H\cong   \oplus_{i=1}^{p^2} \rho_i,$$
	for $p^2$ distinct elements $ \rho_i \in\irr^{\alpha |_{H\times H}}(H)$ for $n=0,1,2,\ldots, \min\{t,s\}-2.$

			\end{enumerate}
			\end{enumerate}
\end{thm}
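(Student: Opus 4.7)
The proof extends the method of Theorem \ref{indexp} to the index-$p^2$ setting via a block-matrix analysis tailored to each of the three forms $(A), (B), (C)$ of $H$. The key observation is that replacing a generator $x_i$ by $x_i^p$ (or $x_i^{p^2}$) in $H$ forces the corresponding row of $A_{1\beta}$, where $\beta = \alpha|_{H\times H}$, to vanish modulo $p$, while dropping a generator $y_j$ removes the corresponding column. Hence $A_{1\beta}$ is effectively a matrix of size $(t-2) \times s$, $(t-1) \times s$, or $(t-1) \times (s-1)$ in cases $(A), (B), (C)$ respectively, with $\mathrm{rank}(A_{1\beta}) = n$ bounded by $\min\{t-2,s\}$, $\min\{t-1,s\}$, or $\min\{t-1,s-1\}$ accordingly.

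For each fixed $A_{1\beta}$ of rank $n$, I would normalise via $P, Q$ so that $PA_{1\beta}Q = \begin{pmatrix} I_n & 0 \\ 0 & 0 \end{pmatrix}$, extend $P$ and $Q$ by identity blocks on the additional rows (and, in case $(C)$, the additional column), and write the transformed $A_{1\alpha}$ in block form. The extension appends two rows $(X_k, Y_k)_{k=1,2}$ in case $(A)$, a single row $(X, Y)$ in case $(B)$, and one row $(R_1', R_2', d)$ together with one column $(C_1', C_2')^T$ sharing the corner $d$ in case $(C)$. Elementary row and column operations using the $I_n$ pivots clear the $X$-blocks (cases $(A), (B)$) and the $R_1', C_1'$-blocks (case $(C)$) without changing rank, so that $\mathrm{rank}(A_{1\alpha}) = n + \mathrm{rank}(M)$, where $M$ is a small residual block: a $2 \times (s-n)$ matrix in case $(A)$, a $1 \times (s-n)$ row in case $(B)$, and a $(t-n) \times (s-n)$ matrix of the form $\begin{pmatrix} 0 & C_2' \\ R_2' & d'' \end{pmatrix}$ in case $(C)$, with $d'' = d - R_1'\cdot C_1'$.

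I would then count the residual blocks $M$ of each rank $0, 1, 2$ by direct combinatorics, multiply by the $p^{2n}$ (cases $(A), (C)$) or $p^n$ (case $(B)$) choices for the cleared auxiliary blocks, and finally multiply by $Z(p,\ast,\ast,n)$ to sum over all $A_{1\beta}$ of rank $n$. In case $(A)$ the rank-$0, 1, 2$ counts for $M$ are $1$, $(p+1)(p^{s-n}-1)$ and $(p^{s-n}-1)(p^{s-n}-p)$, i.e.\ the standard counts of $2 \times (s-n)$ matrices of given rank over $\mathbb F_p$. In case $(C)$, the enumeration requires sub-case analysis on whether $C_2'$ and $R_2'$ vanish: rank $0$ demands both zero and $d'' = 0$; rank $2$ demands both non-zero; rank $1$ is the complement.

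Finally, using the dimension formula $\dim \rho = p^{\mathrm{rank}(A_{1\alpha})}$ from Section \ref{abelian} together with \cite[Theorem 1.1, p. 268]{GK}, I would translate each rank increment into its representation-theoretic meaning: when $\mathrm{rank}(A_{1\alpha}) = n$, the dimension of $\rho$ equals that of an irreducible $\beta$-representation, so $\rho|_H$ is irreducible; an increment of $1$ (resp.\ $2$) forces $\rho|_H$ to decompose as a direct sum of $p$ (resp.\ $p^2$) distinct irreducible $\beta$-representations by Clifford's theorem, since the stabiliser in $G/H$ of any constituent has index equal to the number of summands. The main obstacle is case $(C)$: the simultaneous row-and-column extension produces a residual block whose rank depends jointly on $C_2'$, $R_2'$, and $d''$, and the careful sub-case enumeration is the most delicate combinatorial step; verifying distinctness of the $p$ or $p^2$ Clifford constituents requires showing that the $G/H$-orbit of any irreducible constituent of $\rho|_H$ is free whenever the rank strictly increases.
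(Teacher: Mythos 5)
Your proposal follows the paper's proof essentially step for step: restrict $\alpha$ to obtain $A_{1\beta}$ of the appropriate reduced size, normalise it by $P,Q$ extended by identity blocks, clear the auxiliary blocks against the $I_n$ pivots so that $\mathrm{rank}\,A_{1\alpha}=n+\mathrm{rank}\,M$ for exactly the residual blocks $M$ the paper uses in each of the three cases, count $M$ by rank, multiply by the free cleared parameters and by $Z(p,\ast,\ast,n)$, and convert rank increments into restriction behaviour via Clifford theory (the paper invokes \cite[Theorem 1.1, p.~268]{GK} and \cite[Proposition 3.3]{RH} for precisely the distinctness of constituents that you flag as still needing verification). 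One substantive remark on case $(C)$: your (correct) description of rank $1$ as the complement of ranks $0$ and $2$ necessarily includes the sub-case $C_2'=R_2'=0$, $d''\neq 0$, which contributes an additional $p^{2n}(p-1)$ matrices per fixed $A_{1\beta}$; carried through, your count for part $(3)(ii)$ is $p^{2n}\big(p[(p^{t-n-1}-1)+(p^{s-n-1}-1)]+(p-1)\big)Z(p,t-1,s-1,n)$, and indeed only with this extra term do the three counts sum to the required total $p^{s+t-1}Z(p,t-1,s-1,n)$. So your method, executed faithfully, does not reproduce the displayed formula in $(3)(ii)$ but rather exposes a term that appears to have been dropped from it; you should either state this corrected count or explain why the sub-case $C_2'=R_2'=0$, $d''\neq 0$ is excluded, since as written the two cannot both be right.
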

\begin{proof}
$(1)$
 Assume that there are  $i, j$ such that $i \neq j$ and $H=\langle x_i^p \rangle \times \langle x_j^p \rangle \times \prod_{k=1, k\neq i,j}^t \langle x_k \rangle \times G_2$. In this case, $t>1$. Now the result is true for $s+t \leq 2$.
 We assume that $s+t \geq 3$. 
Let $\beta=\alpha|_{H \times H}$
 and $A_{1\beta} \in M_{(t-2)\times s}(\mathbb Z/p\mathbb Z)$ be a matrix of  rank $n$. Then there exist invertible matrices $P\in M_{(t-2)\times (t-2)}(\mathbb Z/p\mathbb Z)$ and $Q \in M_{s\times s}(\mathbb Z/p\mathbb Z)$ such that
	\[
PA_{1\beta} Q=
	\begin{pmatrix}
	 I_{n \times n} & 0 \\
	  0  &  0
	\end{pmatrix}.
	\]	
 	Let 	\[
 A_\alpha
 =
 \left(
 \begin{array}{c|c}
 0 & A_{1\alpha}\\
 \hline
 -A_{1\alpha}^t & 0
 \end{array}
 \right)
 \in M_{(s+t)\times (s+t)}(\mathbb Z/p\mathbb Z),
 \]  where
 $A_{1\alpha}=
 \left(
 \begin{array}{c}
 A_{1\beta} \\
 D
 \end{array}
 \right)$ for some $D= 	\left(
 \begin{array}{c c}
 D_1 &D_3 \\
 D_2 & D_4
 \end{array}
 \right)$ such that $D_1, D_2\in M_{1\times n}(\mathbb Z/p\mathbb Z)$ and $ D_3, D_4\in M_{1\times (s-n)}(\mathbb Z/p\mathbb Z)$.   
 Then
$(P \oplus I_{2\times 2})^T A_{1\alpha} Q$ is of the form
   \[
 \begin{pmatrix}
  I_{n \times n} & 0  \\
   0  &  0  \\
  X_1 & X_3\\
  X_2 & X_4
 \end{pmatrix},
 \]
 where $X_1, X_2 \in  M_{1\times n}(\mathbb Z/p\mathbb Z)$ and $X_3, X_4 \in M_{1\times (s-n)}(\mathbb Z/p\mathbb Z)$.
 After performing some row  operations on this matrix, we obtain 
 \[
\begin{pmatrix}
 I_{n \times n} & 0  \\
  0  &  0  \\
 0 & X_3\\
 0 & X_4
\end{pmatrix}
\]
 Now we discuss the possible three cases here.\\
 
\textbf{Case 1.} Rank $A_{1\alpha}=n$ if and only if $X_3, X_4$  are zero matrices. 
 Therefore, for a fix $A_{1\beta}$, the number of matrices $A_{1\alpha}$ of rank $n$ is the number of choices of the matrices $X_1, X_2$ which is $p^{2n}$. 
 \\
 
\textbf{Case 2.}  Rank $A_{1\alpha}=n+1$ provided 
$n\leq \min\{t-2,s\}$ and $n+1\leq  \min\{t,s\}$. In this case,  either $X_3 \neq 0$ and $X_4$ is scalar multiple of $X_3$, or $X_3=0$ and $X_4\neq 0$.
 If   $X_3 \neq 0$ and $X_4$ is a scalar multiple of $X_3$, then the number of choices of $X_3$, $X_4$ is $p(p^{s-n}-1)$.
If  $X_3=0$, $X_4\neq 0$, then the number of choices of $X_3, X_4$ is $(p^{s-n}-1)$.
    Hence, the number of matrices $A_{1\alpha}$ of rank $n+1$, for a fix $A_{1\beta}$, is $p^{2n}(p+1)(p^{s-n}-1)$.

Now set of all $\alpha$-regular elements of $G$ is not the same as the set of  all $\beta$-regular elements of $H$, as those are determined by the kernel of the matrices $A_{1\alpha}$ and $A_{1\beta}$ respectively.  Hence, the result follows by \cite[Proposition 3.3]{RH}.
    \\
 
\textbf{Case 3.} Rank $A_{1\alpha}=(n+2)$ provided 
$n\leq \min\{t-2,s\}$ and $n+2\leq  \min\{t,s\}$. In this case,  
$\begin{pmatrix}X_3 \\  X_4 \end{pmatrix} $
 must be  of rank $2$. Observe that $X_3$ or $X_4$  can not be zero matrices, and $X_4$ is not a scalar multiple of $X_3$. So the number of choices of $X_3$, $X_4$ is $(p^{s-n}-1)(p^{s-n}-p)$. Hence, for a fix $A_{1\beta}$, the number of matrices $A_{1\alpha}$ of rank $(n+2)$ is  $p^{2n}(p^{s-n}-1)(p^{s-n}-p)$.\\
%

$(2)$ If $H$ is of the form $(B)$, then take $A_{1\beta} \in M_{(t-1)\times s}(\mathbb Z/p\mathbb Z)$ of rank $n$ and $A_{1\alpha}=
 \left(
 \begin{array}{c}
 A_{1\beta} \\
 D
 \end{array}
 \right)$ for some $D= 	\left(
\begin{array}{c|c}
D_1 & 
	 D_2
	\end{array}
 \right)$ such that $D_1\in M_{1\times n}(\mathbb Z/p\mathbb Z), D_2\in M_{1\times (s-n)}(\mathbb Z/p\mathbb Z)$.   
 Then
$(P \oplus 1)^T A_{1\alpha} Q$ is of the form
   \[
 \begin{pmatrix}
  I_{n \times n} & 0  \\
   0  &  0  \\
  X & Y
 \end{pmatrix},
 \]
 where $X \in  M_{1\times n}(\mathbb Z/p\mathbb Z)$ and $Y \in M_{1\times (s-n)}(\mathbb Z/p\mathbb Z)$.
 After performing some row  operations on this matrix, we obtain 
 \[
\begin{pmatrix}
 I_{n \times n} & 0  \\
  0  &  0  \\
 0 & Y
\end{pmatrix}.
\]
Now proof goes on the same lines as in the previous case.
\\

(3) If $H$ is of the form $(C)$, then take $A_{1\beta} \in M_{(t-1)\times (s-1)}(\mathbb Z/p\mathbb Z)$ of rank $n$ and $A_{1\alpha}=
 \left(
 \begin{array}{c c}
 A_{1\beta} & D_1\\
 D_2 &D_3
 \end{array}
 \right)$ for  $D_1\in M_{(t-1)\times 1}(\mathbb Z/p\mathbb Z), D_2\in M_{1\times (s-1)}(\mathbb Z/p\mathbb Z), D_3 \in  \mathbb Z/p\mathbb Z$.   Then there are invertible matrices $P, Q$ such that $PA_{1\alpha}Q$ is of the form   \[
 \begin{pmatrix}
  I_{n \times n} & 0  & X_1 \\
   0  &  0  & X\\
  Y_1 & Y & Z
 \end{pmatrix},
 \]
   where $X_1 \in  M_{n\times 1}(\mathbb Z/p\mathbb Z), Y_1 \in M_{1\times n}(\mathbb Z/p\mathbb Z)$, $X \in  M_{(t-n-1)\times 1}(\mathbb Z/p\mathbb Z), Y \in M_{1\times (s-n-1)}(\mathbb Z/p\mathbb Z)$, and $Z\in \mathbb Z/p\mathbb Z$.
Then after performing row and column operations, 
finally we have 
   \[
 \begin{pmatrix}
  I_{n \times n} & 0  & 0 \\
   0  &  0  & X\\
  0 & Y & Z_1
 \end{pmatrix}.
 \]
Now proof goes on the same lines as in the previous cases.
\end{proof}

\section{Construction for Direct product of any two $p$-groups}\label{directproduct}
Let 
\begin{eqnarray}\label{anydirect}
G=G_1\times G_2
\end{eqnarray} such that 
\[
G_1/G_1'=\langle x_1 \rangle \times \langle x_2 \rangle \times \ldots  \times \langle x_t \rangle \cong \mathbb Z/p^{r_1}\mathbb Z \times \mathbb Z/p^{r_2}\mathbb Z \times \cdots \times  \mathbb Z/p^{r_t}\mathbb Z,~r_1 \geq r_2 \ldots \geq r_t \geq 1,
\]
\[
G_2/G_2'=\langle y_1 \rangle \times \langle y_2 \rangle \times \ldots  \times \langle y_s \rangle \cong \mathbb Z/p^{n_1}\mathbb Z \times \mathbb Z/p^{n_2}\mathbb Z \times \cdots \times  \mathbb Z/p^{n_s}\mathbb Z, 
~  n_1 \geq n_2 \ldots \geq n_s \geq 1.
\]

\bigskip

\begin{thm}\label{anydirectproduct}
	Let $G=G_1\times G_2$ be of the form \eqref{anydirect}, and $\alpha$ be a bilinear cocycle of $G$. Then there is a  bilinear cocycle $\beta$ of  $G_1/G_1' \times G_2/G_2'$  such that  $\inf([\beta])=[\alpha]$, which determines an isomorphism between the cohomology classes of bilinear cocycles of $G_1/G_1' \times G_2/G_2'$ and $G_1\times G_2$.	
\end{thm}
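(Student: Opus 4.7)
The plan is to apply the inflation homomorphism with respect to the normal subgroup $N = G' = G_1'\times G_2'$ and to show that its restriction to the bilinear summand of $\Ho^2$ realizes the desired correspondence.

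First I would identify $G/N$ canonically with $G_1/G_1' \times G_2/G_2'$ via $g_1 g_2 \cdot N \mapsto (g_1 G_1', g_2 G_2')$; this is a direct product of two abelian groups. Applying the Schur multiplier decomposition \eqref{Schur} to both $G/N$ and $G$: because $G_1/G_1'$ and $G_2/G_2'$ already have trivial commutator subgroups, the third factor in the decomposition of $\Ho^2(G/N,\mathbb C^\times)$ is
\[
\Hom(G_1/G_1' \otimes G_2/G_2', \mathbb C^\times),
\]
which is literally the same group parametrizing bilinear cocycle classes of $G$ via \eqref{Schur}. This common parameter space is what the claimed isomorphism must mediate.

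Next I would verify by direct computation that inflation is transparent on this third factor. For $f \in \Hom(G_1/G_1' \otimes G_2/G_2', \mathbb C^\times)$, let $\beta_f$ be the bilinear cocycle of $G/N$ associated to $f$ via \eqref{bilinearcocycle}, namely
\[
\beta_f(a_1 a_2,\, a_1' a_2') = f(a_1' \otimes a_2), \quad a_i, a_i' \in G_i/G_i'.
\]
Then by the definition of $\inf$ recalled after \eqref{Hochschild},
\[
\inf(\beta_f)(g_1 g_2,\, g_1' g_2') = \beta_f\bigl((g_1 g_2)N,\,(g_1' g_2')N\bigr) = f(g_1' G_1' \otimes g_2 G_2'),
\]
which by \eqref{bilinearcocycle} is exactly the bilinear cocycle $\alpha_f$ of $G$ attached to the same $f$. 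Hence $\inf([\beta_f]) = [\alpha_f]$, and the $\beta$ required by the statement is simply $\beta_f$ when $\alpha = \alpha_f$.

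Finally I would observe that $f \mapsto [\beta_f]$ and $f \mapsto [\alpha_f]$ are each group isomorphisms onto the respective bilinear subgroup of $\Ho^2$ by \eqref{Schur}, and inflation sends $[\beta_f]$ to $[\alpha_f]$; therefore the restriction of $\inf$ to bilinear cocycle classes is a group isomorphism as claimed. I do not anticipate a serious obstacle: the whole content of the theorem is the observation that a bilinear cocycle depends only on the abelianizations of the two factors and that inflation respects this identification; the only care needed is in book-keeping the isomorphism $G/G' \cong G_1/G_1' \times G_2/G_2'$ and in applying \eqref{bilinearcocycle} consistently on both sides.
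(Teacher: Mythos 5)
Your proposal is correct and follows essentially the same route as the paper: both rest on the observation that the decomposition \eqref{Schur} applied to $G_1/G_1'\times G_2/G_2'$ and to $G_1\times G_2$ produces the identical third factor $\Hom(G_1/G_1'\otimes G_2/G_2',\mathbb C^\times)$. In fact your write-up is slightly more complete than the paper's, which stops at noting the coincidence of these factors and says "the result follows," whereas you additionally carry out the cocycle-level computation showing that the inflation map (with $N=G_1'\times G_2'$) really does send the representative $\beta_f$ to $\alpha_f$ for the same $f$ -- the step that justifies the claim $\inf([\beta])=[\alpha]$ rather than merely the abstract existence of an isomorphism.
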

\begin{proof}
Since by \eqref{Schur}, 
$$\Ho^2(G_1/G_1' \times G_2/G_2', \mathbb C^\times)\cong \Ho^2(G_1/G_1' , \mathbb C^\times)\times  \Ho^2(G_1/G_2' , \mathbb C^\times)\times \Hom( G_1/G_1' \otimes G_2/G_2', \mathbb C^\times)$$ and 
$$\Ho^2(G_1\times G_2, \mathbb C^\times)\cong \Ho^2(G_1, \mathbb C^\times)\times  \Ho^2(G_2, \mathbb C^\times)\times \Hom( G_1/G_1' \otimes G_2/G_2', \mathbb C^\times),$$
 the result follows.
\end{proof}
If we consider any element $\eta \in \irr^\beta(G_1/G_1' \times G_2/G_2')$, then we can define $\rho \in \irr^\alpha(G_1 \times G_2)$ by 
$\rho:=\eta \circ \pi$ for the projection map $\pi: G_1 \times G_2 \to G_1/G_1' \times G_2/G_2'$, follows from Theorem \ref{anydirectproduct}.

\begin{enumerate}
\item It follows from Theorem  \ref{anydirectproduct} that the construction of an element of $\irr^\alpha(G)$  such that $[\alpha]$ is of order $p$, 
boils down to the construction of an element of $\irr^\beta(H)$ for the abelian group $H=G_1/G_1' \times G_2/G_2'$  such that $[\beta]$ is of order $p$. Then the construction follows from section \ref{general}.\\

\item Now if  $G=G_1\times G_2$ be of the form  \eqref{anydirect} such that  $G_2/G_2' \cong (\mathbb Z/p\mathbb Z)^s$. It is easy to see  that  $[\alpha]$  are of order $p$ for the bilinear cocycle $\alpha$ of $G$, and $[\beta]$ are of the form given in  \eqref{bilinea}. By Theorem  \ref{anydirectproduct}, it is enough to construct an element of $\irr^\beta(H)$ for the abelian group $H=G_1/G_1' \times (\mathbb Z/p\mathbb Z)^s$. Then the construction follows from section \ref{construction}.
\end{enumerate}

\begin{example}
Suppose $G=G_1\times G_2$, where $G_1$ and $G_2$ are of nilpotency class $2$ having the following presentations
\begin{eqnarray*}
G_1&=&\langle x_1,x_2 \mid  [x_1,x_2]=z, x_i^{p^r}=z^{p^r}=1, i=1,2\rangle,\\
G_2&=&\langle  y_1,y_2 \mid  [y_1,y_2]=y_1^p, y_1^{p^2}=y_2^{p}=1\rangle. 
\end{eqnarray*}

(1) Consider a bilinear cocycle of $G$ of the form
\[
\alpha(x_1^{m_1}x_2^{m_2}y_1^{k_1}y_2^{k_2}z^m, x_1^{m_1'}x_2^{m_2'}y_1^{k_1'}y_2^{k_2'}z^n )=\xi^{m_1'(k_1+k_2)}, \xi=e^{\frac{2\pi i}{p}}.
\]
Now we define the map $\tau$ on the generators of $G$ as defined in Example \ref{example1} (1). From the above discussion, it follows that $\tau \in \irr^\alpha(G)$.

 \bigskip

(2) Consider another bilinear cocycle of $G$  of the form
\[
\alpha(x_1^{m_1}x_2^{m_2}y_1^{k_1}y_2^{k_2}z^m, x_1^{m_1'}x_2^{m_2'}y_1^{k_1'}y_2^{k_2'} z^n)=\xi^{k_1(m_1'+m_2')}, \xi=e^{\frac{2\pi i}{p}}.
\]
Define the map $\tau$ on the generators of $G$ as defined in Example \ref{example1} (2). From the above discussion, it follows that $\tau \in \irr^\alpha(G)$.

\end{example}

%
%
%
%


\bibliographystyle{amsplain}
\bibliography{projective}

\end{document}